\def\@settitle{%
  \vspace*{-20pt}
  \begin{flushleft}%
    \baselineskip14\p@\relax
    \normalfont\bfseries\LARGE
    \@title
  \end{flushleft}%
}
\def\@setauthors{%
  \begingroup
  \def\thanks{\protect\thanks@warning}%
  \trivlist
  \large \@topsep30\p@\relax
  \advance\@topsep by -\baselineskip
  \item\relax
  \author@andify\authors
  \def\\{\protect\linebreak}%
  \authors
  \ifx\@empty\contribs
  \else
    ,\penalty-3 \space \@setcontribs
    \@closetoccontribs
  \fi
  \vskip1ex
  \normalfont
  \@setaddresses
  \endtrivlist
  \endgroup
}
\def\@setaddresses{\par
  \nobreak \begingroup
  \small
  \def\author##1{\nobreak\addvspace\smallskipamount}%
  \def\\{\unskip, \ignorespaces}%
  \interlinepenalty\@M
  \def\address##1##2{\begingroup
    \par\addvspace\bigskipamount\noindent
    \@ifnotempty{##1}{(\ignorespaces##1\unskip) }%
    {\ignorespaces##2}\par\endgroup}%
  \def\curraddr##1##2{\begingroup
    \@ifnotempty{##2}{\nobreak\noindent\curraddrname
      \@ifnotempty{##1}{, \ignorespaces##1\unskip}\/:\space
      ##2\par}\endgroup}%
  \def\email##1##2{\begingroup
    \@ifnotempty{##2}{\nobreak\noindent E-mail address%
      \@ifnotempty{##1}{, \ignorespaces##1\unskip}\/:\space
      \ttfamily##2\par}\endgroup}%
  \def\urladdr##1##2{\begingroup
    \def~{\char`\~}%
    \@ifnotempty{##2}{\nobreak\noindent\urladdrname
      \@ifnotempty{##1}{, \ignorespaces##1\unskip}\/:\space
      \ttfamily##2\par}\endgroup}%
  \addresses
  \endgroup
  \global\let\addresses=\@empty
}
\def\@setabstracta{%
    \ifvoid\abstractbox
  \else
    \skip@25\p@ \advance\skip@-\lastskip
    \advance\skip@-\baselineskip \vskip\skip@
    \box\abstractbox
    \prevdepth\z@ 
    \vskip-10pt
  \fi
}
\renewenvironment{abstract}{%
  \ifx\maketitle\relax
    \ClassWarning{\@classname}{Abstract should precede
      \protect\maketitle\space in AMS document classes; reported}%
  \fi
  \global\setbox\abstractbox=\vtop \bgroup
    \normalfont\small
    \list{}{\labelwidth\z@
      \leftmargin0pc \rightmargin\leftmargin
      \listparindent\normalparindent \itemindent\z@
      \parsep\z@ \@plus\p@
      
    }%
    \item[\hskip\labelsep\bfseries\abstractname.]%
}{%
  \endlist\egroup
  \ifx\@setabstract\relax \@setabstracta \fi
}
\def\section{\@startsection{section}{1}%
  \z@{-1.2\linespacing\@plus-.5\linespacing}{.8\linespacing}%
  {\normalfont\bfseries\Large}}
\def\subsection{\@startsection{subsection}{2}%
  \z@{-.8\linespacing\@plus-.3\linespacing}{.3\linespacing\@plus.2\linespacing}%
  {\normalfont\bfseries}}
\def\subsubsection{\@startsection{subsection}{3}%
  \z@{.7\linespacing\@plus.2\linespacing}{-1.5ex}%
  {\normalfont\itshape}}
\def\@secnumfont{\bfseries}
\newcommand{\cc}{\breve{C}}
\newcommand{\lc}{\breve{L}}
\newcommand{\N}{\mathbb N}
\newcommand{\Z}{\mathbb Z}
\newcommand{\OO}{\mathcal O}
\def\spinc{\textrm{Spin}^c}
\def\Wh{\operatorname{Wh}}
\newtheorem{theorem}{Theorem}[section]
\newtheorem{thm}{Theorem}
\newtheorem{lemma}[theorem]{Lemma}
\newtheorem{proposition}[theorem]{Proposition}
\newtheorem{corollary}[theorem]{Corollary}%
\theoremstyle{definition}
\title
{Smooth concordance of links topologically concordant
to the Hopf link}
\author[Jae Choon Cha]{Jae Choon Cha}
\address{Department of Mathematics and PMI\\
  POSTECH \\
  Pohang 790--784\\
  Republic of Korea}
\email{\rm{jccha@postech.ac.kr}}
\author[Taehee Kim]{Taehee Kim}
\address{Department of Mathematics\\
  Konkuk University \\
  Seoul 143--701 \\
  Republic of Korea}
\email{\rm{tkim@konkuk.ac.kr}}
\author[Daniel Ruberman]{Daniel Ruberman}
\address{Department of Mathematics, MS 050\\
Brandeis University\\
Waltham, MA 02454\\
USA}
\email{\rm{ruberman@brandeis.edu}}
\author[Sa\v so Strle]{Sa\v so Strle}
\address{Faculty of Mathematics and Physics\\
University of Ljubljana\\
Jadranska 21\\
1000 Ljubljana, Slovenia }
\email{\rm{saso.strle@fmf.uni-lj.si}}
\renewcommand{\phi}{\varphi}
\def\subjclassname{\textup{2010} Mathematics Subject Classification}
\let\csname subjclassname@1991\endcsname=\subjclassname
\let\csname subjclassname@2000\endcsname=\subjclassname
\subjclass{%
  57M25, 
  57N70
}
\begin{document}

\begin{abstract}
It was shown by Jim Davis that a $2$-component link with Alexander polynomial one is topologically concordant to the Hopf link. In this paper, we show that there is a $2$-component link with Alexander polynomial one that has unknotted components and is not smoothly concordant to the Hopf link, answering a question of Jim Davis. We construct infinitely many concordance classes of such links, and show that they have the stronger property of not being smoothly concordant to the Hopf link with knots tied in the components.
\end{abstract}
\maketitle

\section{Introduction}
The study of odd-dimensional link concordance has complications that go beyond the study of knot concordance of the individual components.  In this paper, we discuss some additional differences that arise in the classical dimension when one also considers the distinction between smooth and topological concordance.  We consider the question of whether a $2$-component link is smoothly concordant to the Hopf link. A well-known theorem of M.~Freedman~\cite{freedman:non-simply-connected,freedman-quinn} states that a knot whose Alexander polynomial is one is topologically concordant to the trivial knot.  
Completing a program initiated by J. A. Hillman~\cite{hillman:link-invariants}, J.~Davis~\cite{davis:hopf} showed that a $2$-component link with (multivariable) Alexander polynomial one is topologically concordant to the Hopf link.

It follows directly from the existence of smoothly non-slice knots with Alexander polynomial one that Davis' theorem cannot hold in the smooth category. A ``more refined'' question in this setting was posed by Davis: is there a $2$-component link with Alexander polynomial one which is not smoothly concordant to the Hopf link, but each of whose components is smoothly concordant to the unknot?
In Section~\ref{section:proof-main}, we provide such a link; in fact
we prove a stronger result, whose statement benefits from a bit of
terminology.

Given a link $L=(L_1,\ldots,L_n)$, and a split link $L'=(L_1',\ldots,L_n')$, one can form the connected sum $L \# L'=(L_1\# L_1',\ldots,L_n\# L_n')$.   Since $L'$ is split, the connected sum is well-defined.   We will refer to the resulting link as a {\em locally knotted $L$}. Let $H$ denote the Hopf link and $L'$ a $2$-component link. If both components of $L'$ are knots of trivial Alexander polynomial, then
the locally knotted Hopf link $H \#L'$ will have (multivariable) Alexander polynomial equal to one.  Note that if one component of $L'$ is not smoothly slice, then $H \#L'$ is certainly not smoothly concordant to $H$; this is the point of Davis' question.
\begin{thm}\label{T:alex1} There is a $2$-component link with Alexander polynomial one which
has unknotted components and is not smoothly concordant to any locally knotted Hopf link.
\end{thm}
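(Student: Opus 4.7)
The plan is to construct $L$ as an infection of the Hopf link by a carefully chosen Alexander polynomial one knot, and to obstruct smooth concordance via a double branched cover argument combined with the Heegaard Floer $\tau$-invariant.

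For the construction, I would choose a knot $K$ with $\Delta_K(t) = 1$ and $\tau(K) \neq 0$; Hedden's theorem provides the canonical example $K = \mathrm{Wh}^+(T_{2,3})$ with $\tau(K) = 1$. Let $H = (H_1, H_2)$ be the Hopf link and fix an unknotted curve $\eta \subset S^3 \setminus H$ with $\mathrm{lk}(\eta, H_i) = 0$ for both $i$, chosen so that its lift in the double branched cover of $S^3$ over $H_1$ has nontrivial linking with the lift of $H_2$. Take $L$ to be the infection $H_\eta(K)$: replace a tubular neighborhood of $\eta$ by the exterior of $K$ via the meridian-longitude identification. Since $\mathrm{lk}(\eta, H_i) = 0$, each component of $L$ is isotopic to the corresponding component of $H$ and hence is unknotted; and since $\eta$ is null-homologous in the link exterior, $\Delta_L(x,y) = \Delta_H(x,y) = 1$.

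For the obstruction, I would first observe that a link with unknotted components is smoothly concordant to a locally knotted Hopf link $H \# L'$ if and only if it is smoothly concordant to $H$ itself: restricting the concordance to each component forces $L'_i$ to be smoothly slice, and the split structure of $L'$ combined with slice disks in disjoint balls assembles a smooth concordance from $H \# L'$ to $H$. Supposing, for contradiction, that $L$ is smoothly concordant to $H$ via disjoint smoothly embedded annuli $C_1 \sqcup C_2 \subset S^3 \times [0,1]$, I form the double cover $\widetilde W$ of $S^3 \times [0,1]$ branched along $C_1$. Since $L_1$ and $H_1$ are unknotted and $C_1$ is a smooth annulus between them, $\widetilde W$ is a smooth $\Z$-homology cobordism between two copies of $S^3$. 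The lift of $C_2$ then furnishes a smooth concordance between $\widetilde{L_2}$ and $\widetilde{H_2}$, with $\widetilde{H_2}$ the unknot and, by construction of the infection, $\widetilde{L_2}$ an unknot with two prescribed infections by $K$ along the lifts $\widetilde \eta_1, \widetilde \eta_2$ of $\eta$. The curve $\eta$ is arranged so that this double infection produces $\tau(\widetilde{L_2}) \neq 0$, contradicting smooth concordance of $\widetilde{L_2}$ to the unknot.

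The main obstacle is precisely the choice of $\eta$. The curve must be null-homologous in $S^3 \setminus H$ (to preserve $\Delta_L = 1$ and the unknottedness of components), yet its two lifts in the double branched cover must infect the unknotted $\widetilde{H_2}$ in a way that does not cancel at the level of $\tau$; since the lifts have opposite signed linking with $\widetilde{H_2}$ whenever $\mathrm{lk}(\eta, H_2) = 0$, a naive choice can easily yield a slice $\widetilde{L_2}$ like $K \#\, \overline{K}$. A natural candidate is a Bing-doubling curve or a Borromean-style commutator in $\pi_1(S^3 \setminus H)$ whose lifts become geometrically essential and independently nontrivial in the cover, so that an infection-formula computation of $\tau$ produces a nonzero answer. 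Infinitely many concordance classes then follow by varying $K$ through a family of knots whose $\tau$-invariants are linearly independent---for instance iterated Whitehead doubles or the Hedden--Kirk family---and performing the construction for each.
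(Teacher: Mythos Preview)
Your overall strategy---infect the Hopf link along a null-homologous unknotted curve, then obstruct concordance to $H$ via the covering knot in a double branched cover using $\tau$---is exactly the paper's first proof, and your reduction from ``locally knotted Hopf link'' to ``Hopf link'' matches Lemma~\ref{L:components}. The gap is precisely where you say it is: you never exhibit $\eta$ or verify that the resulting covering knot has $\tau \neq 0$, and a gesture toward ``a Bing-doubling curve or a Borromean-style commutator'' is not a proof. The paper resolves this with an explicit curve $\alpha$ (Figure~\ref{F:L(K)}) for which a direct cut-and-paste computation shows that the covering knot of $L_1$ in the double cover branched over $L_2$ is the positive Whitehead double $\Wh(K \# K^r)$. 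Since $\tau(K \# K^r) = 2\tau(K)$, Hedden's theorem gives $\tau(\Wh(K \# K^r)) = 1$ whenever $\tau(K) > 0$. Your worry about cancellation to something like $K \# \overline{K}$ is therefore legitimate but avoidable: the specific $\alpha$ produces $K \# K^r$ (reverse, not mirror) sitting inside a Whitehead pattern, and both features matter.

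Two smaller corrections. The claim that $\mathrm{lk}(\eta, H_i) = 0$ alone forces $L_i$ to remain unknotted is false in general: a winding-number-zero pattern can yield a nontrivial satellite (Whitehead doubling is the standard example), so unknottedness of the components must be checked geometrically for the specific curve chosen. And the hypothesis $\Delta_K = 1$ is unnecessary: null-homologousness of $\eta$ in the Hopf link exterior already guarantees $\Delta_L = 1$ for \emph{any} infecting knot $K$, so one may simply take $K$ to be the right-handed trefoil.
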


In Section~\ref{section:proof-main}, we give two proofs for
Theorem~\ref{T:alex1} which use the Ozsv{\'a}th-Szab{\'o} $\tau$
invariant~\cite{Ozsvath-Szabo:2003-1}.
In Theorem~\ref{T:infinite} using the $d$-invariant (or correction
term~\cite{oz:boundary}) we will in fact show that there are
infinitely many smooth concordance classes of such links.

\medskip
\begin{trivlist}
\item[] \textbf{Acknowledgments:} This paper was inspired by conversations
over the years with Jim Davis about his result on concordance to the
Hopf link.  We thank Tim Cochran, Matt Hedden, Chuck Livingston, and
Adam Levine for helpful discussions.
The first author was supported
by the National Research Foundation of Korea(NRF) grant funded by
the Korea government(MEST) (No.\ 2010--0011629 and 2009--0094069).  
The second author was supported
by the National Research Foundation of Korea(NRF) grant funded by
the Korea government(MEST) (No.\ 2010--0015514 and 2010--0001194). The
third author was partially supported by NSF Grant 0804760.   The
fourth author was  supported in part by the Slovenian Research
Agency program No. P1--0292--0101.  Visits of the authors were
supported by a Slovenian-U.S.A. Research Project BI--US/09--12--004,
and by NSF Grant 0813619.
\end{trivlist}

\section{Covering links and blow-down}
\label{section:covering-links-and-blowdown}

All links will be assumed to be oriented. Generally speaking, concordance will refer to smooth concordance, with the adjective `topological' (always meaning topologically 
locally flat) added as appropriate.  Links are always ordered.  We will generally use the same letters for a link and its components, so that for example $L_1$ and $L_2$ would indicate the first and second components of a $2$-component link~$L$.

We offer two related proofs of Theorem~\ref{T:alex1}.  The first uses
the technique of covering links~\cite{Cha-Kim:2008-1} while the second
comes from observations on blow-down for links. 

\subsection*{Covering link calculus}

In this paper we will use the following construction, which is a
special case of covering link calculus formulated more generally
in~\cite{Cha-Kim:2008-1} for links with arbitrary number of components
in $\Z_p$-homology spheres (see also
\cite{Cochran-Orr:1993-1,Cha-Ko:2006-1,Cha-Livingston-Ruberman:2006-1}).

Let $p$ be a prime.  For a link $L=L_1\cup L_2$ in $S^3$, consider the
$p^a$-fold cyclic cover of $S^3$ branched along~$L_2$, say $Y$, and
then consider a component, say $J$, of the pre-image of~$L_1$.
Viewing $J$ as a knot in $Y$, we call $J$ a \emph{covering knot}
of~$L$.  Though the construction and the lemma below apply to more
general cases, in this paper we will always apply these to a 2-component
link $L$ in $S^3$ with $L_2$ unknotted and
$\operatorname{lk}(L_1,L_2)=1$, so that $Y$ is $S^3$ again and $J$ is
the whole pre-image of~$L_1$.

The following is a well-known fact, which holds in both topological
and smooth category.

\begin{lemma}[e.g., see \cite{Cochran-Orr:1993-1,Cha-Ko:2006-1,Cha-Kim:2008-1}] 
  \label{lemma:covering-knot}
  Suppose $L$ and $L'$ are concordant links.  Then their $p^a$-fold
  covering knots $J$ and $J'$ are rationally concordant in the sense
  of \cite{Cha-Ko:2000-1,Cha:2003-1}, i.e., concordant in a rational
  homology $S^3\times I$.
\end{lemma}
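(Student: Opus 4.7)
The plan is to lift the given concordance from $L$ to $L'$ to the $p^a$-fold cyclic branched cover, where it becomes a rational homology cobordism between $Y$ and $Y'$ that contains an actual concordance between the covering knots $J$ and $J'$.

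Let $C = C_1 \cup C_2 \subset S^3 \times I$ be a smooth concordance from $L$ to $L'$, so that each $C_i$ is an annulus joining $L_i$ to $L'_i$. Form $\pi \co W \to S^3 \times I$, the $p^a$-fold cyclic branched cover along $C_2$. Since $C_2$ is an annulus, a standard Alexander duality computation gives $H_1((S^3\times I)\setminus \nu(C_2);\Z) \cong \Z$, generated by a meridian of $C_2$, so the cover is canonically determined by the surjection onto $\Z/p^a$. Restricting to each end of $S^3 \times I$, the construction recovers the branched covers of $S^3$ along $L_2$ and $L'_2$, hence $\partial W = Y \sqcup (-Y')$ and $W$ is a cobordism from $Y$ to $Y'$.

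The next step is to verify that $W$ is a rational homology cobordism. Write $X = (S^3\times I)\setminus\nu(C_2)$ and let $\tilde X$ be its $p^a$-fold cyclic cover. A transfer argument, using that $p^a$ is invertible in $\Q$, controls $H_*(\tilde X;\Q)$ in terms of $H_*(X;\Q)$, and a Mayer-Vietoris computation gluing in the branched cover of the tubular neighborhood $\nu(C_2)$ then gives $H_*(W;\Q) \cong H_*(S^3\times I;\Q)$. This is the same calculation that shows prime-power cyclic branched covers of $S^3$ along knots are $\Q$-homology spheres (recovering that $Y, Y'$ are $\Q$-homology spheres as the special case of restricting to the ends).

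Finally, since $C_1$ and $C_2$ are disjoint, the preimage $\tilde C_1 := \pi^{-1}(C_1) \subset W$ is a properly and locally flatly embedded surface, unbranched over $C_1$, meeting each end of $W$ in the preimage of $L_1$ (respectively $L'_1$). In the setup used in the paper ($L_2$ unknotted with $\operatorname{lk}(L_1,L_2)=1$), these preimages are connected and $\tilde C_1$ is a single annular concordance from $J$ to $J'$ in the rational homology cobordism $W$. In general one chooses a single annular component of $\tilde C_1$; its two boundary circles are then covering knots of $L$ and $L'$ that are rationally concordant in $W$. The main technical ingredient is thus the homological verification that $W$ is a $\Q$-homology cobordism, which is a routine application of the transfer map and Mayer-Vietoris that pervades the use of cyclic branched covers in concordance theory and is the one place where primality of $p$ is genuinely used; the rest of the proof is a direct lifting of the given concordance.
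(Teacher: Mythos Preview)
The paper does not supply a proof of this lemma at all; it is stated as a ``well-known fact'' with citations to \cite{Cochran-Orr:1993-1,Cha-Ko:2006-1,Cha-Kim:2008-1}, and the corollary that follows is the first place any argument appears. Your write-up is exactly the standard argument found in those references: lift the concordance annulus $C_2$ to the $p^a$-fold cyclic branched cover, observe that the boundary recovers the branched covers $Y$ and $Y'$, and take the preimage of $C_1$ as the concordance between the covering knots. So in content your approach matches what the cited literature does.

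One small correction of attribution: the step showing $W$ is a rational homology cobordism is not really a ``transfer argument.'' The transfer only tells you that $H_*(X;\Q)$ is a summand of $H_*(\tilde X;\Q)$, which is the wrong direction for what you need, and it does not use primality of $p$ (it works for any finite cover). The argument that actually does the work (and is where primality enters, as you correctly note) is the Casson--Gordon/Milnor computation: since $X=(S^3\times I)\setminus\nu(C_2)$ is a $\Z_p$-homology circle, the group ring $\Z_p[\Z/p^a]\cong \Z_p[t]/(t-1)^{p^a}$ is local, and an induction on $a$ (or the Milnor exact sequence for the infinite cyclic cover) gives that $\tilde X$ is again a $\Z_p$-homology circle. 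Gluing back the branch locus via Mayer--Vietoris then yields that $W$ is a $\Z_{(p)}$-homology, hence $\Q$-homology, cobordism. You clearly have the right fact in mind (you identify it with the computation showing prime-power branched covers of knots are $\Q$-homology spheres), so this is a matter of naming the mechanism correctly rather than a gap in the logic.
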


\begin{corollary}
  \label{corollary:covering-knot-hopf-link}
  Suppose $L$ is concordant to the Hopf link.  Then the $p^a$-fold
  covering knot of $L$ is rationally slice, i.e. bounds a 2-disk in a
  rational homology 4-ball.
\end{corollary}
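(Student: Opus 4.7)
The plan is to apply Lemma~\ref{lemma:covering-knot} with $L' = H$, the Hopf link, and then observe that the covering knot of $H$ itself is unknotted, so rational concordance to it gives rational sliceness.

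First I would check that the setup of the covering construction applies to $H$: writing $H = H_1 \cup H_2$, the component $H_2$ is unknotted and $\operatorname{lk}(H_1,H_2) = 1$, so we are in the situation described in the paragraph preceding Lemma~\ref{lemma:covering-knot}. Hence the $p^a$-fold cyclic branched cover of $S^3$ along $H_2$ is again $S^3$, and the covering knot of $H$ is the full preimage of $H_1$ under this branched covering.

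Next I would identify this covering knot explicitly. Since $H_2$ is unknotted and $H_1$ is a meridian of $H_2$, we may take $H_2$ to be a standard round circle in $S^3$ bounding a flat disk $D$ that $H_1$ meets transversally in a single point. The $p^a$-fold branched cover of $S^3$ along such a standardly embedded unknot is again $S^3$, and the preimage of the transverse meridional circle $H_1$ is again an unknotted circle (it bounds the preimage of a small disk $D'$ transverse to $H_2$ at the intersection point with $H_1$, lifted up to one sheet of the cover). Thus the covering knot of $H$ is the unknot~$U$.

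Now I would invoke Lemma~\ref{lemma:covering-knot}: since $L$ is (smoothly) concordant to $H$, their covering knots $J$ and $U$ are rationally concordant, meaning they cobound a smooth annulus in a rational homology $S^3 \times I$. Capping off the $U$ side with the standard disk in $B^4$ (after performing the obvious rational-homology-ball connected sum) produces a smooth disk bounded by $J$ in a rational homology $4$-ball, which is the definition of $J$ being rationally slice. The argument has essentially no obstacles, since the only nontrivial point is the identification of the covering knot of the Hopf link, which is a routine verification about branched covers of $S^3$ along the unknot.
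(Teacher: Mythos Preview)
Your proof is correct and follows exactly the same approach as the paper's proof: observe that the covering knot of the Hopf link is unknotted, then apply Lemma~\ref{lemma:covering-knot} to conclude rational sliceness. The paper simply states both of these steps as ``obvious'' and ``immediate,'' while you have written out the verification in more detail.
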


\begin{proof}
  A covering knot of the Hopf link is obviously unknotted.  From
  Lemma~\ref{lemma:covering-knot}, the conclusion follows immediately.
\end{proof}

\subsection*{Blow-down for links}

 Recall that the result of $\pm 1$ surgery on an unknot in $S^3$ is again $S^3$.  If $L$ is a link with an unknotted component $L_n$, then doing $\pm 1$ surgery on $L_n$ produces a new link $\lc$ in $S^3$.  We say that $\lc$ is obtained by blowing down $L_n$ (with framing specified as necessary).
\begin{lemma}
Suppose $L$ and $L'$ are concordant, and that $L_n$ and $L_n'$ are both unknots.   Then $\lc$ and $\lc'$ are concordant in a homotopy $S^3 \times I$.
\end{lemma}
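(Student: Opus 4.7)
The plan is to carry out a $4$-dimensional analog of the blow-down directly on the concordance, producing a $4$-manifold $W$ whose boundary is $-(S^3,\lc) \sqcup (S^3, \lc')$ and which is a homotopy $S^3 \times I$.

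I start with a smooth concordance $C = C_1 \cup \cdots \cup C_n$ in $V = S^3 \times I$, so that $C_n$ is an annulus with $\partial C_n = L_n \sqcup L_n'$. Choose a trivialization of its normal bundle, writing $\nu(C_n) \cong C_n \times D^2$; this induces framings $k_0, k_1$ on $L_n, L_n'$ respectively. A quick homological observation shows $k_0 = k_1$ automatically: the longitudes on the two ends of the frontier torus bundle $T^2 \times I$ are isotopic through the frontier, so they agree in $H_1(V \setminus \nu(C_n)) \cong \Z\langle \mu \rangle$ (where $\mu$ is the meridian of $C_n$), forcing $k_0\mu = k_1\mu$. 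Since changing the trivialization shifts both framings by the same integer, I can arrange $k_0 = k_1 = \pm 1$ to match the sign used to define $\lc$ and $\lc'$.

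Next define $W = (V \setminus \mathrm{int}\,\nu(C_n)) \cup_{T^2 \times I} (D^2 \times I \times S^1)$, where the gluing identifies the frontier $C_n \times S^1 = (S^1 \times I) \times S^1$ with the side $(S^1 \times I) \times S^1 \subset \partial(D^2 \times I \times S^1)$, swapping the roles of the meridian disk and the $S^1$-factor of $C_n$. At each end the effect on the boundary $3$-manifold is exactly $\pm 1$ Dehn surgery on $L_n$ (resp.\ $L_n'$), producing $(S^3, \lc)$ and $(S^3, \lc')$. The remaining components $C_1, \ldots, C_{n-1}$ lie entirely in $V \setminus \nu(C_n) \subset W$ and supply the required annular concordance from $\lc$ to $\lc'$.

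It remains to verify that $W$ is a homotopy $S^3 \times I$. Write $V' = V \setminus \mathrm{int}\,\nu(C_n)$ and $M = D^2 \times I \times S^1$. Van Kampen applied to $W = V' \cup_{T^2 \times I} M$ yields $\pi_1(W) = \langle \mu \mid k\mu \rangle = \Z/k\Z$, trivial because $k = \pm 1$. The parallel Mayer--Vietoris computation shows that the map $H_1(T^2) \to H_1(V') \oplus H_1(M) \cong \Z \oplus \Z$ has matrix of determinant $\pm 1$, forcing $H_1(W) = H_2(W) = 0$, while the same sequence gives $H_3(W) \cong \Z$. The main subtlety throughout is the framing bookkeeping: the $\pm 1$ choice is exactly what makes $W$ a homotopy product, and with any other framing $W$ would carry nontrivial second homology.
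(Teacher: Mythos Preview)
Your approach is exactly the one the paper takes (and attributes to Gordon): perform $\pm1$ surgery on the concordance annulus $C_n$ and verify that the result is a simply-connected homology cobordism from $S^3$ to itself, carrying the remaining components along as the new concordance. The framing discussion is correct, and the van Kampen conclusion is right, though your presentation $\langle \mu \mid k\mu\rangle$ is a shorthand rather than a literal computation: what you actually get is $\pi_1(W)=\pi_1(V')/\langle\!\langle \lambda'\rangle\!\rangle$, and since $L_n$ is unknotted the $0$-framed longitude $\lambda$ is null-homotopic in $V'$, so $\lambda'=\lambda\mu^{\pm1}$ reduces to $\mu^{\pm1}$; as $\mu$ normally generates $\pi_1(V')$ (because $V$ is simply connected), this kills everything.

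There is one real gap in the homology step. From Mayer--Vietoris, injectivity of $H_1(T^2)\to H_1(V')\oplus H_1(M)$ only shows that the connecting map $H_2(W)\to H_1(T^2)$ vanishes; to conclude $H_2(W)=0$ you still need $H_2(V')=0$, since $H_2(M)=0$ and the sequence reads $H_2(V')\oplus H_2(M)\twoheadrightarrow H_2(W)$. This is true but requires its own argument: for instance, excision gives $H_k(V,V')\cong H_k(\nu(C_n),T^2\times I)$, and one checks that $H_3(V)\to H_3(V,V')$ is an isomorphism (both are $\Z$, and $[S^3\times\{0\}]$ maps to the generator $[\nu(L_n)]$), so the long exact sequence of the pair yields $H_2(V')=H_3(V')=0$. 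Alternatively, once $\pi_1(W)=1$ and $\partial W=S^3\sqcup S^3$ are known, Lefschetz duality gives $H_3(W)\cong H^1(W,\partial W)\cong\Z$, and then $\chi(W)=\chi(V)=0$ together with duality forces $H_2(W)=0$.
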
\label{L:blow1}
The lemma holds in either the topological or smooth category; in the topological category we can of course replace the homotopy $S^3 \times I$ by the real one.
\begin{proof} Let $C = (C_1,C_2,\ldots)$ be the concordance in $S^3 \times I$.  Following Gordon's classic paper~\cite{gordon:contractible} we can do the $\pm 1$ surgery on the component $C_n$, to produce a simply-connected homology cobordism between $S^3$ and itself. Since the surgery took place in the complement of $\bigcup_{i\ne n} C_i$, those components give a concordance~$\cc$.
\end{proof}
We will say that the concordance  $\cc$ is obtained by blowing down $C_n$.  In the special case when $L'= H$, the Hopf link, note that $\lc'_1$ is the unknot.  Hence we obtain an immediate corollary.

\begin{corollary}\label{C:blowdown}  Let $L$ be a $2$-component link such that the component $L_2$ is unknotted.  If $L$ is concordant to the Hopf link,  then the knot $\lc_1$ obtained by blowing down $L_2$ is {\em homotopically slice}, that is, slice in a homotopy $4$-ball.
\end{corollary}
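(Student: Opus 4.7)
The plan is to unpack the preceding blow-down lemma. First I would apply that lemma to a concordance $C$ from $L$ to the Hopf link $H$. By hypothesis $L_2$ is unknotted, and $H_2$ is also unknotted, so the lemma (applied with $L' = H$ and framing used to blow down $L_2$) produces a concordance $\cc$ in a homotopy $S^3 \times I$ between $\lc_1$, the blow-down of $L_1$ along $L_2$, and the corresponding blow-down $\lc'_1$ of $H_1$ along $H_2$.

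The second step is to identify $\lc'_1$. Since $H_1$ bounds an embedded disk in $S^3$ meeting $H_2$ transversely in a single point, after the $\pm 1$ surgery on $H_2$ this disk can be capped off by a meridian disk of the surgery solid torus, showing that $H_1$ is isotopic in the resulting $S^3$ to the core of that solid torus. Hence $\lc'_1$ is the unknot.

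Finally, to produce a slice disk in a homotopy $4$-ball, I would cap off the ``$\lc'_1$-end'' of the homotopy $S^3 \times I$ by a standard $B^4$ containing the obvious slice disk $D$ for the unknot $\lc'_1$. The union is a homotopy $4$-ball $W$, and the union $\cc \cup D$ is a properly embedded $2$-disk in $W$ with boundary $\lc_1 \subset \p W = S^3$. This is the desired slice disk.

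There is no real obstacle here; the corollary is a formal consequence of the preceding lemma. The only point warranting verification is the classical identification of $\lc'_1$ with the unknot, which follows from the geometric description of the Hopf link given above.
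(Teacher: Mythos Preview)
Your proposal is correct and follows exactly the paper's route: apply the blow-down lemma to the concordance between $L$ and $H$, observe that $\lc'_1$ is the unknot, and cap off the homotopy $S^3\times I$ with a standard $B^4$ and the obvious slice disk. The paper compresses all of this into a single sentence, so your write-up simply supplies the details.

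One small wording issue: after $\pm 1$ surgery on $H_2$, the inner boundary of the punctured disk you describe is a meridian $\mu$ of $H_2$, which is \emph{not} the bounding slope of the surgery solid torus (that slope is $\mu\pm\lambda$), so it does not literally ``cap off by a meridian disk.'' What is true is that $\mu$ intersects the meridian disk of the surgery torus once, hence is isotopic to the core of that torus; the annulus then carries $H_1$ to that core, which is unknotted since the two solid tori give a genus-one Heegaard splitting of~$S^3$. This is surely what you intended, and it does not affect the validity of the argument.
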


\subsection*{Local knotting}

For a link $L=(L_1,\ldots,L_n)$ let $S(L)$ denote the split link with the same components as $L$, thus $S(L)=L_1\coprod\cdots\coprod L_n$.
We will make use of the following observation regarding components of concordant links.
\begin{lemma}\label{L:components}\label{L:split-link}
Suppose that $L$ and $J$ are links with the same number of components and that $J$ has unknotted components. If $L$ is concordant to $\tilde J$, a
locally knotted $J$, then $L$ is concordant to~$J\# S(L)$.  In particular, if $L$ and $J$ have unknotted components and $L$ is concordant to a locally knotted $J$, then $L$ is concordant to~$J$.
\end{lemma}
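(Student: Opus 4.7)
The plan is to reduce the hypothesis $L\sim \tilde J$ to a componentwise statement and then reassemble by a split connected-sum argument. Write $\tilde J = J \# J'$ with $J'=J_1'\coprod\cdots\coprod J_n'$ split. Each component of a link concordance is a knot concordance, so from $L\sim \tilde J$ I get knot concordances $L_i \sim J_i \# J_i'$. Since the $J_i$ are unknotted, connect-summing with $J_i$ is trivial up to knot concordance, so $L_i \sim J_i'$ in $S^3$ for each $i$.

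Next I assemble these knot concordances into a link concordance of the split links $S(L)$ and $J'$. Both are split, so I can isotope their components into disjoint $3$-balls $B_i\subset S^3$, and place the individual knot concordance $L_i \sim J_i'$ inside the disjoint cylinder $B_i\times I\subset S^3\times I$. The union is a (split) link concordance $S(L)\sim J'$ whose $i$-th trace is contained in $B_i\times I$.

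The third step is to upgrade this to a concordance $J\# S(L)\sim J\# J' = \tilde J$. Pick base arcs $\gamma_i$ from $J_i$ to a point in $B_i$; these simultaneously define both connected sums $J\# S(L)$ and $J\# J'$ since both $L_i$ and $J_i'$ live in $B_i$. Taking the product concordance $J\times I$ and tubing it along the product arcs $\gamma_i\times I$ to the split concordance from the previous step produces the required link concordance. Transitivity then gives $L\sim \tilde J\sim J\# S(L)$. The ``in particular'' statement is immediate: if the components of $L$ are also unknotted then $S(L)$ is the $n$-component unlink and $J\# S(L)=J$.

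The only mildly subtle point is the ambient tubing in the third step, but this is exactly the standard construction showing that connected sum with a split link is well-defined up to concordance, so no real obstacle arises.
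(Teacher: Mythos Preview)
Your proof is correct and follows essentially the same line as the paper's. Both arguments extract the componentwise concordances $L_i \sim J_i \# J'_i \cong J'_i$ (using that $J_i$ is unknotted), and then build a concordance $\tilde J \sim J\# S(L)$ by taking the product on the $J$ part and splicing in those knot concordances on the split part; the paper phrases this last step as ``sum each component $L_j'\times I$ with a copy of $\widetilde C_j$'' where $\widetilde C_j$ is the original concordance component turned upside down, which amounts to your tubing construction.
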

\begin{proof}
Let $C$ be a concordance from $L$ to $\tilde J = J\# L'$ for some split link $L'$. Denote by $\widetilde C_j$ the concordance from the component $L_j'$ of $L'$ to the corresponding component $L_j$ of $L$ obtained by turning $C_j$ upside down. Take the product concordance from $\tilde J$ to itself, and sum each component $L_j' \times I$ with a copy of  $\widetilde C_j$, to obtain a concordance from $\tilde J$ to $J\# S(L)$.  Composing $C$ with this concordance gives a concordance from $L$ to~$J\# S(L)$.
\end{proof}

\begin{figure}[h]
\psfrag{L1}{$L_1$}
\psfrag{L2}{$L_2$}
\psfrag{L}[c][c]{$L$\kern5mm}
\psfrag{Jt}[c]{\kern10mm$\widetilde{J} = J \# L'$}
\psfrag{L1p}{$L_1'$}
\psfrag{L2p}{$L_2'$}
\psfrag{C1}{$C_1$}
\psfrag{C2}{$C_2$}
\psfrag{C1t}{$\widetilde{C_1}$}
\psfrag{C2t}{$\widetilde{C_2}$}
\psfrag{J}{$J\#S(L)$}
\includegraphics[scale=.7]{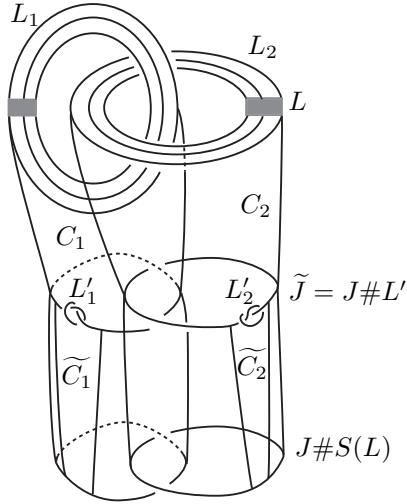}
\caption{Schematic illustration of proof of Lemma~\ref{L:components}}
\label{F:knotted-link}
\end{figure}

\section{Examples and proof of Theorem~\ref{T:alex1}}
\label{section:proof-main}

Consider the link $L = L(K)$ pictured in Figure~\ref{F:L(K)}, where
for the moment $K$ is an arbitrary knot in $S^3$ (cf.\ \cite[Figure~1]{Cha-Ko:1999-2}). The notation means
that the band labelled $K$ should be tied in the knot $K$, in such a
way that the framing of the band is~$0$.   The dotted curve $\alpha$
is not a component of the link, but is used in the description of
$L(K)$ as an `infection'.
\begin{figure}[h]\label{f:hopfexample}
\psfrag{L1}{$L_1$}
\psfrag{L2}{$L_2$}
\psfrag{al}{$\alpha$}
\psfrag{K}{$K$}
\includegraphics[scale=.9]{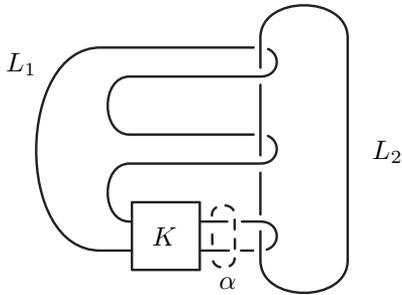}
\caption{The link $L(K)$}
\label{F:L(K)}
\end{figure}

\subsection*{Alexander polynomial of $L(K)$}

\begin{proposition} The link $L(K)$ has the following properties:
\begin{enumerate}
\item Both components $L_1$ and $L_2$ are unknotted.
\item The multivariable Alexander polynomial $\Delta_{L(K)}(x,y) = 1$.
\end{enumerate}
\end{proposition}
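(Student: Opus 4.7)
The plan is to recognize $L(K)$ as obtained by an \emph{infection} (satellite construction) from the base link $L_0 := L(U)$ along the circle $\alpha$ with companion knot $K$; the $0$-framing of the band prescribes the preferred longitudinal framing used in the infection. With this identification in hand, both claims reduce to two standard inputs: a structural observation about $\alpha$ in $S^3\setminus L_0$, and the classical effect of infection on the Alexander polynomial.

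For part (1), the key observation to read off from the figure is that $\alpha$ bounds an embedded disk in $S^3\setminus L_i$ for each $i=1,2$ \emph{separately}, not just in $S^3\setminus L_0$. Infection along a curve that bounds in the complement of a single component is realized by a self-diffeomorphism of $S^3$ supported in a ball disjoint from $L_i$, and hence preserves the knot type of $L_i$ in $S^3$. So each $L_i$ is isotopic to its counterpart in $L_0 = L(U)$, which is visibly unknotted in the diagram.

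For part (2), apply the infection formula for multivariable Alexander polynomials: whenever $\alpha$ is null-homologous in $S^3\setminus L_0$ (which follows from the disk-bounding statement above), one has
\[
\Delta_{L(K)}(x,y) = \Delta_{L_0}(x,y)\cdot \Delta_K(\omega)
\]
up to a unit in $\Z[x^{\pm 1},y^{\pm 1}]$, where $\omega$ represents the class of $\alpha$ in $H_1(S^3\setminus L_0)=\Z^2$. Since $\alpha$ is null-homologous, $\omega = 1$ and $\Delta_K(1) = 1$ for any knot $K$, so $\Delta_{L(K)}$ and $\Delta_{L_0}$ agree up to units. One then finishes by computing $\Delta_{L_0} = 1$ directly, either by recognizing $L_0$ as the Hopf link (whose multivariable Alexander polynomial is trivial) or via a Fox calculus computation from a Wirtinger presentation of $\pi_1(S^3\setminus L_0)$.

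The main obstacle is the diagram-level verification that $\alpha$ bounds disks in $S^3\setminus L_1$ and $S^3\setminus L_2$ individually. This single geometric fact drives both conclusions: it makes the components unknotted and it forces the companion factor $\Delta_K$ in the infection formula to collapse to $1$. Extracting it requires careful tracking of how $\alpha$ threads each component and confirming that signed intersections with candidate disks cancel, and is the step most sensitive to the specific choice of diagram.
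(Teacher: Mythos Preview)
Your argument for (2) is essentially the paper's: both identify $L(K)$ as an infection of the Hopf link $L(U)$ along $\alpha$, observe that $\alpha$ is null-homologous in the link complement, and conclude that the Alexander module of the universal abelian cover---hence $\Delta_{L(K)}$---is unchanged. The paper phrases this via Mayer--Vietoris (replacing a solid torus by a homology solid torus over each lift of $\alpha$), while you invoke the infection formula $\Delta_{L(K)}=\Delta_{L_0}\cdot\Delta_K(\omega)$ with $\omega=1$; these are the same computation.

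For (1) the routes differ. The paper simply reads unknottedness of each $L_i$ off the diagram of $L(K)$ and declares it obvious. You instead argue structurally: if $\alpha$ bounds an embedded disk in $S^3\setminus L_i$, then the infection takes place inside a $3$-ball missing $L_i$, so the knot type of $L_i$ is preserved from $L(U)$. This is correct (your phrase ``self-diffeomorphism of $S^3$'' is slightly loose---infection is cut-and-paste, not a map---but the conclusion that the resulting $S^3$ contains $L_i$ in an unchanged complementary ball is valid). The claim that $\alpha$ bounds a disk in $S^3\setminus L_i$ for each $i$ is indeed true in this example and can be read from the figure. Note, however, that this claim is not merely sufficient for (1) but essentially equivalent to it: if $L_1$ remains unknotted after infection by \emph{every} companion $K$, then by standard satellite theory the pattern $L_1\subset S^3\setminus N(\alpha)$ must lie in a ball, i.e.\ $\alpha$ bounds a disk off $L_1$. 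So your ``main obstacle'' is a reformulation of (1) in the base case rather than a reduction, and verifying it from the diagram is comparable in difficulty to the paper's direct inspection. What your framing does buy is a clean unification: one diagrammatic check (splitness of $\alpha$ from each $L_i$) simultaneously yields (1) and the null-homologousness needed for~(2).
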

\begin{proof}
  (1) is obvious.  Observe that if $K$ were the unknot, then $L(K)$ is
  just the Hopf  link.  Also, $L(K)$ is obtained from the Hopf
  link by removing a tubular neighborhood of $\alpha$ and filling it
  in with the exterior of~$K$; in the resulting 3-manifold, which is
  $S^3$, the Hopf link becomes~$L(K)$.  It is well known that this operation
  preserves the homology of the universal cover of the link exterior
  (and consequently the Alexander polynomial) if $\alpha$ is
  null-homologous in the link complement.  For example, to show this one may apply
  Mayer-Vietoris and uses that a knot exterior is a homology $S^1\times
  D^2$. It follows that $L(K)$ has Alexander polynomial one.
\end{proof}

We remark that to prove (2) above, one may use the method of
D.~Cooper~\cite{cooper:abelian-cover}, which computes the Alexander
polynomial from matrices defined from Seifert surfaces
for the two components having only clasp-type intersections.  In our case, one can use the surfaces $S_1$ and $S_2$ in Figure~\ref{F:alex}.
\begin{figure}[h]
\psfrag{L1}[c]{$L_1$}
\psfrag{L2}[c]{$L_2$}
\psfrag{B1}{$S_1$}
\psfrag{B2}{$S_2$}
\psfrag{K}[c][c]{$K$\kern.5mm}
\psfrag{a}[r]{$a$}
\psfrag{b}{$b$}
\includegraphics[scale=.6]{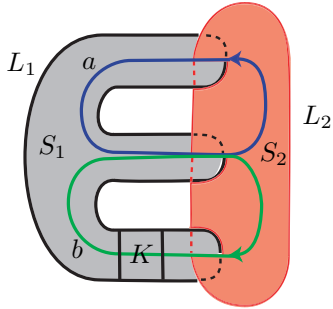}
\caption{Surfaces for Cooper's method}
\label{F:alex}
\end{figure}
The recipe in~\cite{cooper:abelian-cover} is to take a basis for $H_1(S_1 \cup S_2)$, and then derive a Seifert-type matrix recording linking numbers
amongst suitable pushoffs of the curves in this basis.  In our case, the curves $a$ and $b$ in Figure~\ref{F:alex} form such a basis. Since the linking numbers of the relevant pushoffs of $a$ and $b$ are independent of the choice of $K$, it follows that the Alexander polynomial of $L(K)$ is equal to that of $L(\text{unknot})$ which is the Hopf link.  Namely, $\Delta_{L(K)}=1$ for any~$K$.

\subsection*{Proof of Theorem~\ref{T:alex1}}

We present two proofs of Theorem~\ref{T:alex1} based on the
Ozsv{\'a}th-Szab{\'o} $\tau$ invariant~\cite{Ozsvath-Szabo:2003-1},
using the topological mechanisms described in
Section~\ref{section:covering-links-and-blowdown}.  In the following
section, we will give a third proof, using the $d$-invariant (or
correction term~\cite{oz:boundary}) that yields a stronger result
(stated as Theorem~\ref{T:infinite}).

\begin{proof}[First proof of Theorem~\ref{T:alex1}]
  Let $L=L(K)$ be the link
  illustrated in Figure~\ref{F:L(K)} where $K$ is a knot with positive
  $\tau$-invariant.  For instance, $K$ could be chosen to be the
  right-handed trefoil.  Denote the components of $L$ by $L_2$ and
  $L_1$ as in Figure~\ref{F:L(K)}.  We consider the covering knot $J$
  of $L$ obtained by taking the double cover of $S^3$ branched along
  the component~$L_2$; $J$~is the pre-image of $L_1$ in the
  resulting~$S^3$.  A standard cut-paste argument along the obvious
  2-disk bounded by $L_2$ shows that $J$ has the knot type of the
  positive Whitehead double $\Wh(K\#K^r)$, where $K^r$ denotes the
  orientation reverse of~$K$.

  From the hypothesis we have $\tau(K\#K^r)=2\tau(K)>0$.  Therefore by
  a result of Manolescu-Owens \cite{manolescu-owens:delta}, $J=\Wh(K\#
  K^r)$ is not rationally slice.  (See also
  Hedden~\cite{Hedden:2007-1}, which gives $\tau(J)=1$.)  Consequently
  $L$ is not concordant to the Hopf link, by
  Corollary~\ref{corollary:covering-knot-hopf-link}.

  Since the components of $L$ are unknotted, and $L$ is not concordant
  to the Hopf link, Lemma~\ref{L:components} implies that it is in
  fact not concordant to a locally knotted Hopf link.
\end{proof}

\begin{proof}[Second proof of Theorem~\ref{T:alex1}]
We will show that $L=L(K)$ is not concordant to the Hopf link,
making use of a recent $\tau$-invariant calculation by Adam
Levine~\cite{levine:doubles}. He considers a generalized Whitehead
double $D_{J,s}(K,t)$, defined (roughly) as a plumbing of two
annuli, tied into knots $J$ and $K$, with $s$ and $t$ twists
respectively. The case $s=-1$ and $J = \OO$ (the unknot) corresponds
to the $t$-twisted positive Whitehead double of~$K$. The knot in
Figure~\ref{F:blowdown} is the knot $D_{\OO,-2}(K,0)$, and
Levine~\cite[Proposition 2.5]{levine:doubles} computes its
$\tau$-invariant to be
\begin{equation}\label{E:tau}
D_{\OO,-2}(K,0) = \begin{cases}
0 & \text{if\ } \tau(K) \leq 0,\\
1 & \text{if\ } \tau(K) > 0.
\end{cases}
\end{equation}

Again we choose
$K$ to be a knot with $\tau(K)> 0$ for~$L$. Let $\lc_1$ be the knot
in $S^3$ obtained from $L$ by blowing down $L_2$ with positive
framing. One can easily see that $\lc_1$ is the knot
$D_{\OO,-2}(K,0)$. If $L$ were concordant to the Hopf link, then as
in Corollary~\ref{C:blowdown}, the knot $\lc_1$ would be
homotopically slice. However, by Levine's calculation, $\tau(\lc_1)
= 1$, which means that $\lc_1$ is not homotopically slice.
\begin{figure}[h]
\psfrag{K}[c][c]{\kern1.3mm$K$}
\includegraphics[scale=1]{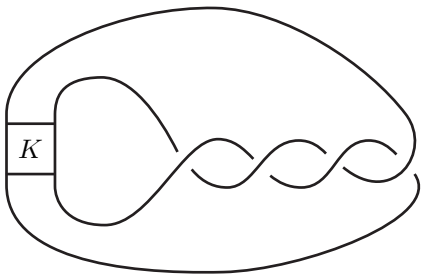}
\caption{$L(K)$ after blowing down $L_2$}
\label{F:blowdown}
\end{figure}
\end{proof}

A minor variation on the second proof may be obtained using the work of Rudolph~\cite{rudolph:quasi-slice}, coupled with the observation of Livingston~\cite{livingston:tau} that the results of~\cite{rudolph:quasi-slice} apply as well to surfaces lying in a homology ball, rather than~$B^4$.   The knot $\lc_1$ pictured in Figure~\ref{F:blowdown} may be described as the boundary of the plumbing of two annuli. One consists of two parallel copies of the knot $K$, with linking number $0$, and the other is an unknotted annulus that twists $-2$ times about its core.  As in the proof of~\cite[Lemma 2]{rudolph:quasi-slice}, if $K$ is strongly quasi-positive, then the knot $\lc_1$ is also strongly quasi-positive.   So if we choose $K$ to be non-trivial and strongly quasi-positive, then $\lc_1$ is not homotopically slice, and hence $L(K)$ is not concordant to the Hopf link.

\section{Infinitely many concordance classes}

By varying the choice of $K$, we can obtain infinitely many examples of links that have the properties stated in Theorem~\ref{T:alex1}. The verification that these links are not concordant to one another gives an alternate proof of Theorem~\ref{T:alex1}.
\begin{theorem}\label{T:infinite}
There is a sequence of knots $K(n)\  (n \in \N)$ such that the links
$L(K(n))$ satisfy the conclusion of Theorem~\ref{T:alex1} and are
distinct up to smooth concordance.
\end{theorem}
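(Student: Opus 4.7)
The plan is to choose knots $K(n)$ with strictly increasing $\tau$-invariant --- say $K(n)=\#^n T$, the connected sum of $n$ right-handed trefoils, so that $\tau(K(n))=n$ --- and to verify the two requirements separately. Each $L(K(n))$ has unknotted components and trivial multivariable Alexander polynomial by the proposition above, and since $\tau(K(n))>0$ the first proof of Theorem~\ref{T:alex1} already shows that $L(K(n))$ is not concordant to any locally knotted Hopf link.

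The main task is to prove that the $L(K(n))$ are pairwise non-concordant, and the plan is to apply covering link calculus a second time. If $L(K(n))$ and $L(K(m))$ were concordant, then by Lemma~\ref{lemma:covering-knot} the covering knots $J(n)=\Wh(K(n)\# K(n)^r)$ and $J(m)$ would be concordant in a rational homology $S^3\times I$. Since each $J(n)$ has trivial Alexander polynomial, its double branched cover $\Sigma_2(J(n))$ is an integer homology sphere, and taking double branched covers of the rational concordance produces a rational homology cobordism from $\Sigma_2(J(n))$ to $\Sigma_2(J(m))$. Because the Ozsv\'ath-Szab\'o correction term $d$ is a rational homology cobordism invariant of integer homology spheres~\cite{oz:boundary}, this forces $d(\Sigma_2(J(n)))=d(\Sigma_2(J(m)))$.

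It therefore suffices to exhibit infinitely many values in the sequence $\{d(\Sigma_2(J(n)))\}_{n\in\N}$. The plan is to give an explicit integer surgery description of $\Sigma_2(J(n))$ involving two parallel copies of $K(n)\# K(n)^r$ clasped together (mirroring the Whitehead double construction at the branched cover level), and then apply the Ozsv\'ath-Szab\'o integer surgery formula together with $\tau$-style inequalities in the spirit of the calculations of Manolescu and Owens~\cite{manolescu-owens:delta} to show that $d(\Sigma_2(J(n)))$ tends to $-\infty$ as $n\to\infty$. The main obstacle is precisely this surgery identification and the subsequent Heegaard Floer calculation; once it is in hand, the covering-link reduction via Lemma~\ref{lemma:covering-knot} and the homology-cobordism invariance of $d$ is routine, and Lemma~\ref{L:components} then upgrades pairwise non-concordance of the $L(K(n))$ to pairwise non-concordance modulo locally knotted Hopf links as well.
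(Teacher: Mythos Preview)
Your strategy is sound and runs parallel to the paper's, but as written it is a plan rather than a proof: you explicitly defer the decisive step, namely identifying $\Sigma_2(J(n))$ as a concrete surgery and computing its $d$-invariant. Until that is done, nothing distinguishes the $L(K(n))$ from one another. The vague reference to ``$\tau$-style inequalities in the spirit of Manolescu--Owens'' does not substitute for the computation, and in fact inequalities would not suffice here --- you need exact values (or at least infinitely many distinct values) of~$d$, not just nonvanishing.

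By contrast, the paper's argument is complete and somewhat more efficient. Rather than iterating the covering-link construction, it first \emph{blows down} $L_2$ to obtain a knot $\lc_1(K)$ in~$S^3$ (so the concordance lives in a homotopy $S^3\times I$, not merely a rational one), and then takes the double branched cover of~$\lc_1(K)$. The Akbulut--Kirby algorithm applied to the evident genus-one Seifert surface, followed by a single slam-dunk, identifies this branched cover as $S^3_{1/4}(K\# K^r)$. Choosing $K(n)=T_{2,2n+1}$ (reversible and alternating), one then reads off $d\bigl(S^3_{1/4}(K(n)\# K(n))\bigr)=d\bigl(S^3_{1}(K(n)\# K(n))\bigr)=-2n$ directly from the Ozsv\'ath--Szab\'o formulas for $1/q$-surgery and for alternating knots. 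No Heegaard Floer surgery-formula gymnastics are needed.

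Your covering-link route can in fact be finished the same way: the double branched cover of the untwisted positive Whitehead double $\Wh(K')$ is, by the identical Akbulut--Kirby plus slam-dunk computation, $S^3_{1/2}(K'\# K'^r)$, and for $K'=K(n)\# K(n)^r$ with $K(n)$ alternating and reversible the alternating-knot formula again yields infinitely many distinct $d$-values. So the missing piece is not deep --- but you should supply it, and replace the gesture toward ``$\tau$-style inequalities'' with the explicit surgery identification and the alternating formula.
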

\begin{proof}
For any knot $K$, consider the knot $\lc_1(K)$ obtained by blowing
down the second component of~$L(K)$; there is an evident genus $1$
Seifert surface for $\lc_1(K)$ visible in Figure~\ref{F:blowdown}.
Using the algorithm of Akbulut and
Kirby~\cite{akbulut-kirby:branch}, it is then easy to draw a surgery
picture for  $M_2(\lc_1(K))$, the double cover of $S^3$ branched over $\lc_1(K)$, illustrated below.
\begin{figure}[h]
\psfrag{K}[c][c]{\kern1.2mm$K$}
\psfrag{Kr}[c][c]{\kern2.5mm$K^r$}
\psfrag{0}{$0$}
\psfrag{m4}{$-4$}
\includegraphics[scale=.8]{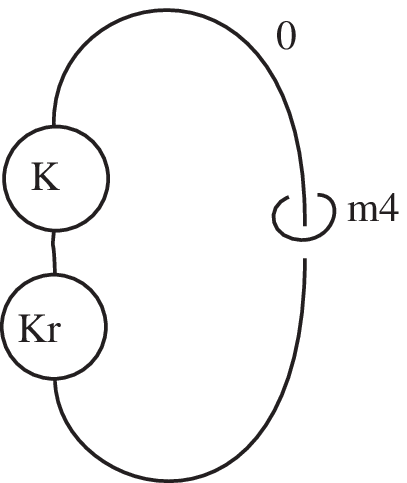}
\label{F:hopfbranch}
\caption{Double cover of $(\lc_1(K))$}
\end{figure}
By the slam-dunk move~\cite{gompf-stipsicz:book} this is diffeomorphic to $S^3_{1/4}(K\# K^r)$, a homology $3$-sphere.

Let $K(n)$ be the
$(2,2n+1)$ torus knot, and write $L(n)$ for~$L(K(n))$. Suppose that
$L(m)$ and $L(n)$ are concordant. Blow down the concordance between
$L_2(m)$ and $L_2(n)$ to get a concordance in a homotopy $S^3 \times
I$ between $\lc_1(m)$ and~$\lc_1(n)$.  Now, we take the $2$-fold
branched cover over that concordance, to get a $\Z_2$-homology
cobordism between $M_2(\lc_1(m))$ and~$M_2(\lc_1(n))$.  This implies
that the $d$-invariants (or correction term~\cite{oz:boundary}) of
these manifolds, in the trivial $\spinc$ structure, are equal.

The torus knots are reversible, and alternating so that $d$-invariants of their surgeries may be computed from their Alexander polynomials and signatures.  In particular, from \cite[Corollary 9.14]{oz:boundary} and \cite[Corollary 1.5]{oz:alternating} we get
$$d(S^3_{1/4}(K(n) \# K(n)^r)) = d(S^3_1(K(n) \# K(n)^r)) = -2n.$$
It follows that for positive $m \neq n$, the links $L(m)$ and $L(n)$
are not concordant and also not concordant to the Hopf link. Again
by Lemma~\ref{L:components}, $L(n)$ with $n>0$ is not concordant to
any locally knotted Hopf link.
\end{proof}

\def\cprime{$'$}
\providecommand{\bysame}{\leavevmode\hbox to3em{\hrulefill}\thinspace}
\def\MR#1{}
\providecommand{\MRhref}[2]{%
  \href{http://www.ams.org/mathscinet-getitem?mr=#1}{#2}
}
\providecommand{\href}[2]{#2}

\end{document}